\pdfoutput=1
\documentclass[11pt,reqno]{amsart}

\usepackage{amsmath,amssymb,amsthm}
\usepackage{lmodern}
\usepackage[T1]{fontenc}
\usepackage[margin=1in]{geometry}
\usepackage{enumitem}
\usepackage{float}
\usepackage{booktabs}
\usepackage{tikz}
\usetikzlibrary{arrows.meta,calc,decorations.markings,patterns}
\usepackage{tikz-cd}
\usepackage[round,authoryear]{natbib}
\usepackage{hyperref}
\hypersetup{hidelinks}

\theoremstyle{plain}
\newtheorem{theorem}{Theorem}[section]
\newtheorem{lemma}[theorem]{Lemma}
\newtheorem{proposition}[theorem]{Proposition}

\theoremstyle{definition}
\newtheorem{definition}[theorem]{Definition}
\newtheorem{construction}[theorem]{Construction}

\theoremstyle{remark}
\newtheorem{remark}[theorem]{Remark}

\DeclareMathOperator{\Span}{span}

\DeclareMathOperator{\Hom}{Hom}

\newcommand{\RR}{\mathbb{R}}
\newcommand{\ZZ}{\mathbb{Z}}
\newcommand{\QQ}{\mathbb{Q}}

\newcommand{\cA}{\mathcal{A}}
\newcommand{\cM}{\mathcal{M}}
\newcommand{\Sig}{\Sigma}
\newcommand{\Gam}{\Gamma}
\newcommand{\wSig}{\widehat{\Sigma}}

\title[Basis-Canonical Toric Projectivization]
{Basis-Canonical Projectivization for Smooth Complete Toric Varieties}

\author[P.~Bakhtary]{Parsa Bakhtary}
\address{Independent, Sunnyvale, California, USA}
\email{pbakhtary@gmail.com}
\urladdr{https://orcid.org/0000-0002-4491-5216}

\date{}

\begin{document}

\begin{abstract}
We give an explicit projectivization algorithm for smooth complete toric
varieties in arbitrary dimension $n\ge 2$. After fixing an ordered
lattice basis, every smooth complete fan~$\Sig$ admits a
basis-canonical refinement~$\wSig=\Gam(\Sig)$ that is smooth,
complete, projective, and obtained from~$\Sig$ by star subdivisions of
two-dimensional cones. Equivalently, $X_{\wSig}\to X_\Sig$ is a finite
sequence of ordinary toric blow-ups along smooth invariant centers of
codimension two. The algorithm first constructs a projective
wall-arrangement fan by extending the spans of the codimension-one cones
of~$\Sig$ to central hyperplanes. It then sign-adapts~$\Sig$ to this
arrangement by repeatedly subdividing bad two-cones of maximal weight. A
lexicographic badness profile gives termination, while projectivity
follows from a wall-bend sandwich argument combining a support function
pulled back from the arrangement with a relatively ample perturbation.
The construction is canonical relative to the chosen ordered basis and
requires no additional projectivizing refinement after wall-adaptation.
We illustrate the procedure on Oda's non-projective threefold and compare
its deterministic length with a separate threefold whose minimal ordinary
invariant projectivization length is exactly two.
\end{abstract}

\subjclass[2020]{Primary 14M25; Secondary 14E05, 52B20}
\keywords{Toric varieties, projective fans, non-projective, toric blow-ups, star subdivisions, toric Chow lemma}

\maketitle

\section{Introduction}\label{sec:intro}

A fundamental question in toric geometry asks: given a smooth complete
toric variety~$X_\Sig$, can one find a smooth \emph{projective} toric
variety dominating it via a proper birational toric morphism? In
dimension two this is already settled, since every smooth complete toric
surface is projective; the first genuinely nontrivial case is dimension
three. The bare existence of such a dominating projective toric variety is not
new. One route is the toric Chow lemma, which gives a projective
refinement of a complete fan, followed by toric resolution of
singularities; this proves existence but may pass through singular
intermediate fans.  More directly for smooth complete toric varieties,
Bonavero establishes, following Morelli, that every such toric variety
becomes projective after a finite sequence of blow-ups along smooth toric
subvarieties~\citep{Bonavero,Morelli}. The same projectivization
principle appears in the toric Moishezon step in the exposition of
Abramovich--Matsuki--Rashid after Morelli: after star subdivisions one
obtains a fan which is part of a projective fan~\citep[Theorem~4.5]{AMR}.

Thus the contribution of this note is not the existential statement by
itself, but a direct, basis-canonical, and elementary algorithm for
producing one such projectivization.

The construction is designed to keep the geometry transparent.  First we
associate to~$\Sig$ a projective wall-arrangement fan~$\cA_\Sig$ by
extending every codimension-one cone of~$\Sig$ to a central hyperplane.
Second, we refine~$\Sig$ to a smooth fan~$\Gam(\Sig)$ subordinate to
$\cA_\Sig$ by resolving exactly the two-dimensional cones that cross one
of these walls. This wall-adaptation stage uses only codimension-two
centers.  The output~$\Gam(\Sig)$ is then \emph{automatically}
projective: it refines the projective fan~$\cA_\Sig$ and at the same time
dominates~$\Sig$ by a projective toric morphism, and these two facts
combine to produce an explicit strictly convex support function on
$\Gam(\Sig)$. No additional refinement step is needed to force projectivity,
and the morphism to~$X_\Sig$ is a sequence of ordinary toric blow-ups
along smooth invariant centers of codimension exactly two.

\smallskip

\noindent\textbf{Basis-canonical convention.}
Throughout the construction an ordered lattice basis~$B$ of~$N$ is
fixed. It orients each wall normal by the first nonzero coordinate,
orders the wall normals, and breaks ties among equally weighted bad
two-cones. A construction below is called \emph{$B$-canonical}, or
basis-canonical, if it is determined by~$\Sig$ and this ordered basis.
The wall-arrangement fan itself is intrinsic to~$\Sig$; the basis only
chooses a deterministic order for the subsequent operations.

Our main result is the following.

\begin{theorem}[Basis-canonical toric projectivization]\label{thm:main}
Let $N \cong \ZZ^n$ with $n \ge 2$, let $B$ be an ordered lattice basis
of~$N$, and let $\Sig$ be a smooth complete fan in
$N_\RR = N \otimes_\ZZ \RR$. There is a $B$-canonical finite sequence
of toric blow-ups
\[
  X_{\wSig} = X_r \longrightarrow X_{r-1} \longrightarrow \cdots
  \longrightarrow X_0 = X_\Sig
\]
such that:
\begin{enumerate}[label=\textup{(\roman*)}]
\item $X_{\wSig}$ is smooth and projective;
\item each morphism $X_{i+1} \to X_i$ is the blow-up of a smooth
  torus-invariant center of codimension exactly two;
\item the sequence and the output fan~$\wSig$ are determined by~$\Sig$
  and the ordered basis~$B$.
\end{enumerate}
\end{theorem}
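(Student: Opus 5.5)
We follow the two-stage outline of the introduction. \emph{Stage one} builds the wall arrangement. For each codimension-one cone $\tau\in\Sig$ let $H_\tau=\Span(\tau)$, a central rational hyperplane, and choose a primitive normal $u_\tau\in M$ oriented by the first nonzero coordinate in the dual of~$B$. The finitely many distinct hyperplanes $H_\tau$ cut $N_\RR$ into the fan $\cA_\Sig$. Its cones are the closures of the sign-chambers $\{\operatorname{sign}\langle u_\tau,\cdot\rangle\ \text{fixed}\}$. This fan is complete and projective, because
\[
\varphi_\cA(v)=\sum_\tau |\langle u_\tau,v\rangle|
\]
is piecewise linear on $\cA_\Sig$ and strictly convex across every wall. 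Note that $\cA_\Sig$ is generally not simplicial, but we will only use it through this support function.

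\emph{Stage two} is wall-adaptation, which we run as an algorithm. Say a cone $\sigma\in\Sig_i$ is \emph{bad} for a wall $H_\tau$ if $\langle u_\tau,\cdot\rangle$ takes both signs on the relative interior of~$\sigma$. A smooth cone is subordinate to the arrangement exactly when none of its rays lie strictly on opposite sides of any wall. So badness is detected on two-cones $\langle v,w\rangle$ with $\langle u,v\rangle>0>\langle u,w\rangle$ for some wall normal~$u$. Define the weight of such a bad two-cone as, for instance, $\max_u |\langle u,v\rangle|+|\langle u,w\rangle|$, maximized over the walls it crosses. At each step we choose the bad two-cone of maximal weight, breaking ties by the order induced by $B$ on walls and on lexicographic coordinates of $v+w$. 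We then star-subdivide it at $v+w$. The result is the blow-up of the smooth codimension-two orbit closure, which gives (ii). Smoothness is preserved because $\{v,w,\ldots\}\mapsto\{v+w,w,\ldots\},\{v,v+w,\ldots\}$ are again lattice bases. Determinacy, which gives (iii), is clear because every choice is made by $\Sig$ and $B$.

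For \emph{termination} we use a badness profile. This is the vector, indexed by decreasing weight, counting bad two-cones of each weight, ordered lexicographically. Subdividing $\langle v,w\rangle$ replaces it by $\langle v,v+w\rangle$ and $\langle v+w,w\rangle$. For the crossing wall $u$, $|\langle u,v+w\rangle|<\max(|\langle u,v\rangle|,|\langle u,w\rangle|)$, so whichever new cone remains bad has strictly smaller weight. The new two-cones $\langle v+w,x\rangle$, for $x$ a further ray of a maximal cone containing $\langle v,w\rangle$, must also be shown to have weight below the current maximum. This is the delicate combinatorial point. The weight must be chosen so that $|\langle u,v+w\rangle|+|\langle u,x\rangle|$ is dominated by the weights of $\langle v,x\rangle$ or $\langle w,x\rangle$, which are at most the maximum. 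Ties must then be handled via the $B$-order. Granting this, the profile strictly decreases in a well-ordered set, so the algorithm stops at a smooth fan $\wSig=\Gam(\Sig)$ refining both $\Sig$ and $\cA_\Sig$.

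The main obstacle is \emph{projectivity}, which gives (i), and we would attack it by the wall-bend sandwich. Each blow-up $X_{i+1}\to X_i$ is projective with relatively ample $-E_i$. Hence there is a support function $\psi$ on $\wSig$, namely a suitable positive combination of the exceptional-divisor functions, that is strictly convex across every wall of $\wSig$ interior to a cone of $\Sig$. This is relative ampleness of the composite, obtained by the standard inductive $\epsilon$-weighting. Walls of $\wSig$ not interior to a cone of $\Sig$ lie in some $H_\tau$, because $\wSig$ refines $\Sig$ and the codimension-one cones of $\Sig$ span the $H_\tau$. The pullback $\varphi_\cA$ is linear on every cone of $\wSig$, since $\wSig$ refines $\cA_\Sig$. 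It strictly bends across each such wall, with bend $2|u_\tau|$ in the appropriate sense. We then take $\varphi=\varphi_\cA+\epsilon\psi$ with $0<\epsilon\ll1$. Walls inside cones of $\Sig$ may also lie in some $H_\tau$, but there $\varphi_\cA$ bends nonnegatively and $\psi$ strictly. On walls lying in some $H_\tau$, the strict bend of $\varphi_\cA$ dominates the finitely many bends of $\epsilon\psi$. The function $\varphi$ is therefore strictly convex on $\wSig$, and $X_{\wSig}$ is projective.
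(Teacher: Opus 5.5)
\textbf{Gap: termination of the simultaneous scheme.} Your Stage one and your projectivity step are sound and essentially match the paper. The function $\varphi_\cA=\sum_\tau|\langle u_\tau,\cdot\rangle|$ is the support function of the zonotope $\sum_i[-m_i,m_i]$. Your $\varphi_\cA+\epsilon\psi$ argument is the paper's sandwich lemma. You classify walls by whether they are interior to a cone of $\Sig$ rather than of $\cA_\Sig$, which is harmless once $\wSig\preceq\cA_\Sig$. The genuine gap is termination. You explicitly ``grant'' it, and for the simultaneous multi-wall scheme you propose, the domination you need is false.

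Let $\langle v,w\rangle$ be the chosen bad cone of maximal weight $W$, put $s=v+w$, and let $x$ be a further ray of a maximal cone containing $\langle v,w\rangle$. Take a wall normal $u$ that is \emph{not} crossed by $\langle v,w\rangle$, with $\langle u,v\rangle,\langle u,w\rangle>0>\langle u,x\rangle$. Then $\langle s,x\rangle$ is $u$-bad with $u$-weight $\langle u,v\rangle+\langle u,w\rangle+|\langle u,x\rangle|$. This is strictly larger than the $u$-weights of both $\langle v,x\rangle$ and $\langle w,x\rangle$. Maximality only gives $\langle u,v\rangle+|\langle u,x\rangle|\le W$ and $\langle u,w\rangle+|\langle u,x\rangle|\le W$. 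So nothing prevents, for example, the values $2,2,-1$ with $W=4$ realized on some other wall, in which case the new cone has weight $5>W$. The top entry of your badness profile can therefore increase, and no tie-breaking rule repairs this.

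\textbf{Missing idea: one wall normal at a time.} In the round for $m$, subdivide only $m$-bad cones of maximal $m$-weight $|m(v)|+|m(w)|$. A new $m$-bad cone $\langle s,x\rangle$ then forces an old $m$-bad cone $\langle v,x\rangle$ or $\langle w,x\rangle$ for the \emph{same} $m$. Maximality gives $w_m(\langle s,x\rangle)\le\max(|m(v)|,|m(w)|)<W$, so the single-wall profile strictly decreases.

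Across rounds you need the monotonicity lemma. If the fan is already $m'$-adapted, every cone has all its generators in one closed half-space of $m'$. Since $m'(v+w)=m'(v)+m'(w)$ stays in that half-space, later subdivisions never recreate $m'$-badness. This rules out the configuration above for walls already handled. Walls not yet handled may get worse, but their rounds have not started. With this sequential structure the output is subordinate to $\cA_\Sig$, and your projectivity argument then applies verbatim.
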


The construction is
\begin{equation}\label{eq:construction}
  \Sig \;\xrightarrow{\textup{sign-adapt}}\;
  \wSig = \Gam(\Sig),
\end{equation}
where~$\Gam(\Sig)$ is obtained from~$\Sig$ by a finite deterministic
sequence of star subdivisions along two-dimensional cones (the
\emph{wall-adaptation} stage). The auxiliary wall-arrangement
fan~$\cA_\Sig$ does not appear in the output; it is used only to organize
the wall-adaptation and to prove projectivity of~$\Gam(\Sig)$ in
Section~\ref{sec:projgamma}.

The theorem should be read as a constructive refinement of the classical
toric Moishezon--Morelli projectivization statement, not as a first
existence theorem. Compared with the Chow-then-desingularize approach,
it never leaves the smooth category and it gives a prescribed sequence
of centers. Compared with the Morelli--Abramovich--Matsuki--Rashid
factorization machinery and the related weak-factorization work of
Abramovich--Karu--Matsuki--W{\l}odarczyk and W{\l}odarczyk
\citep{AMR,AKMW,Wlodarczyk}, it avoids cobordisms and
$\pi$-desingularization: the only operation is the
Euclidean sign-adaptation move on a bad two-cone, and projectivity of
the output is read off directly from the wall arrangement. The
factorization framework is designed for the more general problem of
factoring birational maps, whereas the present note solves the easier
problem of constructing one projective smooth dominator. The tradeoff
is that the output is generally far from minimal. The final section
separates this algorithmic length from the true minimal projectivizing
length and records a threefold that requires exactly two ordinary
invariant blow-ups to become projective.

This is also different from the projectivization-by-small-modification
framework of Fujino--Sato and Rossi. Fujino--Sato prove that every
complete toric variety has a projective $\QQ$-factorial toric model
isomorphic in codimension one; in the smooth threefold case with Picard
number at most five, their projectivizing sequences consist of flops or
anti-flips, within a broader flips/flops/anti-flips
framework~\citep{FujinoSato}. \citet{Rossi} develops the related Cox-ring and GKZ
picture.  Those results change the variety in codimension
one by small birational maps; here the final variety dominates the
original by blow-ups.  Adiprasito--Pak prove a much stronger recent
common-stellar-subdivision theorem, including a weighted form of Oda's
strong factorization conjecture and a PL common-subdivision theorem
\citep{AdiPak}. Their theorem is broader and deeper, but it is not a
substitute for the present construction: their common stellar subdivisions
are not a prescribed sequence of ordinary smooth toric blow-ups over a fixed
smooth fan. Here every move is the star subdivision at the sum of two
primitive generators of a smooth two-cone, so the centers are explicitly
smooth invariant codimension-two centers.

The paper is organized as follows.
Section~\ref{sec:conventions} fixes conventions.
Section~\ref{sec:wall} constructs~$\cA_\Sig$.
Section~\ref{sec:adaptation} proves the sign-adaptation lemma.
Section~\ref{sec:sequential} handles sequential adaptation.
Section~\ref{sec:projgamma} proves that~$\Gam(\Sig)$ is projective.
Section~\ref{sec:proof} assembles the proof.
Section~\ref{sec:examples} gives computed threefold examples and
compares the deterministic algorithmic length with the true minimal
projectivizing length.

\section{Conventions}\label{sec:conventions}

Let $N \cong \ZZ^n$ be a lattice with dual lattice
$M = \Hom(N, \ZZ)$. Set $N_\RR = N \otimes_\ZZ \RR$ and
$M_\RR = M \otimes_\ZZ \RR$. A \emph{cone} is always a strongly convex
rational polyhedral cone in~$N_\RR$. A \emph{fan} is a collection of
cones closed under taking faces and with pairwise intersections being
faces. For primitive lattice vectors $u_1, \ldots, u_k \in N$, write
$\langle u_1, \ldots, u_k \rangle$ for the cone
$\RR_{\ge 0} u_1 + \cdots + \RR_{\ge 0} u_k$. A cone is
\emph{simplicial} if its primitive generators are linearly independent,
and \emph{smooth} (or \emph{unimodular}) if they extend to a lattice
basis of~$N$. A fan is smooth if every cone is smooth.

A fan~$\Gam$ \emph{refines} a fan~$\Sig$, written
$\Gam \preceq \Sig$, if every cone of~$\Gam$ is contained in a cone
of~$\Sig$. A complete fan is \emph{projective} if it admits a strictly
upper convex piecewise-linear support function. For a smooth cone $\sigma = \langle u_1, \ldots, u_k \rangle$ in a
fan~$\Sig$, the \emph{star subdivision} of~$\Sig$ at the
ray~$\RR_{\ge 0}(u_1 + \cdots + u_k)$ replaces every cone of~$\Sig$
containing~$\sigma$ by the cones obtained by removing one generator
of~$\sigma$ at a time and inserting $u_1 + \cdots + u_k$. Geometrically, this is the toric blow-up of the orbit
closure~$V(\sigma)$; see~\citep[\S 11.1]{CLS}.

\section{The wall-arrangement fan}\label{sec:wall}

\begin{definition}\label{def:wall}
Let $\Sig$ be a smooth complete fan in~$N_\RR$. For each
codimension-one cone $\tau \in \Sig(n{-}1)$, let
$H_\tau = \Span_\RR(\tau)$.  Choose the primitive
normal~$m_\tau \in M$ such that $H_\tau = \ker(m_\tau)$, with sign
determined by requiring the first nonzero coordinate of~$m_\tau$ in the
fixed dual basis to be positive. Remove duplicates, and write
$\cM_\Sig = \{m_1, \ldots, m_s\}$.  Define the
\emph{wall-arrangement fan}~$\cA_\Sig$ to be the complete fan cut out by
the central hyperplane arrangement
$\{\ker(m_1), \ldots, \ker(m_s)\}$.
\end{definition}

Following Danilov's proof of the toric Chow lemma~\citep{Danilov}, we
extend the facet spans of a complete fan to full hyperplanes;
Lemma~\ref{lem:wall-properties} records that the resulting arrangement
fan is a projective refinement.

\begin{lemma}\label{lem:wall-properties}
$\cA_\Sig$ is a complete projective fan refining~$\Sig$.
\end{lemma}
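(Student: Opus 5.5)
We prove the three assertions separately: that $\cA_\Sig$ is a complete fan, that it refines $\Sig$, and that it is projective. Completeness is immediate. The cones of $\cA_\Sig$ are the closures of the sign-chambers
\[
C_\epsilon=\{x\in N_\RR : \epsilon_i\, m_i(x)\ge 0 \text{ for } i=1,\ldots,s\},\qquad \epsilon\in\{\pm1\}^s,
\]
together with all their faces. Each such cone is rational polyhedral, being cut out by finitely many rational half-spaces, and the cones cover $N_\RR$. Intersections are faces because each sign-cone is the face of the appropriate full-dimensional chamber obtained by setting some of the $m_i$ to zero.

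For strong convexity we use that $\Sig$ is complete and smooth, so the $m_i$ span $M_\RR$. To see this, suppose some nonzero $v$ satisfied $m_i(v)=0$ for all $i$. Then every wall $H_\tau$ would contain the line $\RR v$. Take a maximal cone $\sigma$ of $\Sig$. Its facets span hyperplanes $H_\tau$, and the intersection of these $n$ linearly independent hyperplanes is $\{0\}$, which is a contradiction. So the common kernel is $0$, and no chamber contains a line.

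For refinement, fix a full-dimensional chamber $C$ and an interior point $x\in C^\circ$. We claim $C$ lies in a single maximal cone of $\Sig$. Suppose a segment from $x$ to some $y\in C^\circ$ leaves a maximal cone $\sigma\ni x$. Perturbing $y$ generically, the segment must cross a facet $\tau$ of $\sigma$ at a relative interior point. But $\tau\subset H_\tau=\ker m_\tau$, and $m_\tau$ has constant strict sign on $C^\circ$, which is a contradiction. Hence $C^\circ$, and therefore $C$, is contained in $\sigma$. Lower-dimensional cones of $\cA_\Sig$ are faces of chambers, so they lie in cones of $\Sig$ as well.

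For projectivity we use the explicit support function
\[
\psi(x)=-\sum_{i=1}^s |m_i(x)|.
\]
It is linear on each chamber, equal there to $-\sum_i\epsilon_i m_i$, and upper convex as a sum of concave functions. Strict convexity with respect to $\cA_\Sig$ is the step requiring care. Across any wall of two adjacent chambers, exactly one term $|m_i|$ changes its linear form. Hence the linear functions of distinct maximal cones differ, and $\psi$ lies strictly below the linear extension of each chamber's form off that chamber. More precisely, if $x\notin C$, then some $m_i$ has the opposite strict sign at $x$ compared to $C$, so $-|m_i(x)|<-\epsilon_i m_i(x)$, while the other terms satisfy $\le$. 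This gives $\psi(x)<\ell_C(x)$, which is the strict convexity condition.

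The main obstacle is the combination of strong convexity of the cones with the refinement argument; both rest on the walls of each maximal cone of $\Sig$ spanning $M_\RR$ and on the genericity of the crossing. The convexity computation itself is routine.
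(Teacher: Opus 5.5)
Your proof is correct and follows the paper's approach: the spanning argument via the $n$ facet hyperplanes of a smooth maximal cone is the same, and your segment argument for refinement is equivalent to the paper's observation that a chamber meeting $\operatorname{int}(\sigma)$ lies on the same side of every facet plane of $\sigma$ (the generic perturbation is unnecessary, since every boundary point of $\sigma$ already lies on some facet). For projectivity, $\psi=-\sum_i|m_i|$ is exactly the support function of the zonotope $Z_\Sig=\sum_i[-m_i,m_i]$, so your direct strict-convexity check is a self-contained proof of the zonotope--arrangement duality that the paper cites from Ziegler.
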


\begin{proof}
\textit{Completeness.}  A finite central hyperplane arrangement
decomposes~$N_\RR$ into finitely many closed polyhedral cones whose
union is~$N_\RR$. Each cone is an intersection of closed half-spaces
defined by rational hyperplanes, hence is a rational polyhedral cone.

\textit{Refinement.}  Each maximal cone
$\sigma = \langle r_1, \ldots, r_n \rangle$ of~$\Sig$ is the
intersection of the $n$ closed half-spaces defined by its facet planes.
Since these facet planes are among the hyperplanes defining~$\cA_\Sig$,
any chamber meeting the interior of~$\sigma$ is contained in~$\sigma$.
Every lower-dimensional cone of~$\cA_\Sig$ is a face of a chamber of the
arrangement; if that chamber is contained in a maximal cone~$\sigma$ of
$\Sig$, then the face is contained in the same closed cone~$\sigma$.
Thus every cone of~$\cA_\Sig$ is contained in a cone of~$\Sig$.

\textit{Projectivity.}  The zonotope
$Z_\Sig = \sum_{i=1}^s [-m_i, m_i] \subset M_\RR$
has $\cA_\Sig$ as its normal fan, by the standard
zonotope-hyperplane-arrangement
duality~\citep[Theorem~7.16]{Ziegler}. It is full-dimensional because
the $m_i$ span~$M_\RR$: if a nonzero $\ell \in N_\RR$ satisfied
$\langle m_i, \ell \rangle = 0$ for all~$i$, then~$\ell$ would lie in
every facet hyperplane of a maximal cone
$\sigma = \langle r_1, \ldots, r_n \rangle$, but
$\bigcap_{i=1}^n \Span(r_1, \ldots, \widehat{r}_i, \ldots, r_n)
= \{0\}$ since $r_1, \ldots, r_n$ is a lattice basis. In particular,
since the $m_i$ span~$M_\RR$, no line through the origin is contained
in a single chamber, so every cone of~$\cA_\Sig$ is strongly convex.
\end{proof}

\begin{figure}[t]
\centering
\begin{tikzpicture}[scale=1.6,>=Stealth]
  \begin{scope}[shift={(-3.5,0)}]
    \node[above] at (0,2.1) {\textbf{(a)} Fan $\Sig$};
    \draw[->,thick] (0,0) -- (2,0) node[right] {$r_1$};
    \draw[->,thick] (0,0) -- (0.6,1.8) node[above] {$r_2$};
    \draw[->,thick] (0,0) -- (-1.6,0.8) node[left] {$r_3$};
    \draw[->,thick] (0,0) -- (-0.4,-1.8) node[below] {$r_4$};
    \fill[blue!8] (0,0) -- (2,0) -- (0.6,1.8) -- cycle;
    \fill[red!8] (0,0) -- (0.6,1.8) -- (-1.6,0.8) -- cycle;
    \fill[green!8] (0,0) -- (-1.6,0.8) -- (-0.4,-1.8) -- cycle;
    \fill[orange!8] (0,0) -- (-0.4,-1.8) -- (2,0) -- cycle;
  \end{scope}
  \begin{scope}[shift={(1.5,0)}]
    \node[above] at (0,2.1) {\textbf{(b)} $\cA_\Sig$};
    \draw[dashed,gray] (-2,0.3) -- (2,-0.3);
    \draw[dashed,gray] (-1.5,-1.5) -- (1.5,1.5);
    \draw[dashed,gray] (0.8,-1.8) -- (-0.8,1.8);
    \draw[dashed,gray] (-2,-0.7) -- (2,0.7);
    \draw[->,thick,blue!70!black] (0,0) -- (2,0);
    \draw[->,thick,blue!70!black] (0,0) -- (0.6,1.8);
    \draw[->,thick,blue!70!black] (0,0) -- (-1.6,0.8);
    \draw[->,thick,blue!70!black] (0,0) -- (-0.4,-1.8);
    \draw[->,thick,red!60!black] (0,0) -- (1.5,1.5);
    \draw[->,thick,red!60!black] (0,0) -- (-1.5,-1.5);
    \draw[->,thick,red!60!black] (0,0) -- (-0.8,1.8);
    \draw[->,thick,red!60!black] (0,0) -- (0.8,-1.8);
  \end{scope}
\end{tikzpicture}
\caption{%
  The wall-arrangement construction in a two-dimensional example.
  \textbf{(a)}~A smooth complete fan~$\Sig$ in~$\RR^2$; every smooth
  complete toric surface is already projective, so the figure
  illustrates only the construction of~$\cA_\Sig$, not the
  non-projective phenomenon.
  \textbf{(b)}~The wall-arrangement fan~$\cA_\Sig$: each ray
  of~$\Sig$ is extended to a central hyperplane (a line through the
  origin), and the resulting arrangement defines the
  refinement~$\cA_\Sig \preceq \Sig$}
\label{fig:wall}
\end{figure}

\section{The sign-adaptation algorithm}\label{sec:adaptation}

The idea is that a two-dimensional cone straddles the hyperplane
$\ker(m)$ precisely when its two generators take values of opposite
sign under~$m$; we call such a cone \emph{bad}, and the algorithm
eliminates bad cones one at a time.

\begin{definition}\label{def:bad}
Let $\Phi$ be a smooth fan and $m \in M$ primitive. A two-cone
$\tau = \langle u, v \rangle \in \Phi(2)$ is \emph{$m$-bad} if
$m(u) \cdot m(v) < 0$. The fan~$\Phi$ is \emph{$m$-adapted} if it has
no $m$-bad two-cones.  The \emph{$m$-weight} of a bad two-cone is
$w_m(\tau) = |m(u)| + |m(v)|$.
\end{definition}

\begin{remark}\label{rem:halfspace}
Since the fan is smooth, every cone is simplicial.  For a simplicial
cone~$\sigma$, the absence of $m$-bad two-faces is equivalent to all
ray generators of~$\sigma$ lying in one of the closed half-spaces
$\{m \ge 0\}$ or $\{m \le 0\}$: if no two-face has generators of
opposite sign, then the signs (and zeros) of $m$ on the generators
are all nonnegative or all nonpositive.
\end{remark}

\begin{definition}[Badness profile]\label{def:badness-profile}
Fix a primitive $m\in M$.  For a smooth fan~$\Phi$, let
\[
  B_m(\Phi)=\{\tau\in\Phi(2):\tau \text{ is }m\text{-bad}\}.
\]
After fixing an integer $D$ at least as large as every bad weight under
consideration, define
\[
  \mathbf b_m(\Phi)=(c_D,c_{D-1},\ldots,c_1),\qquad
  c_k=\#\{\tau\in B_m(\Phi):w_m(\tau)=k\}.
\]
We order these profiles lexicographically from high weight to low
weight, using the usual order on each coordinate. Thus
$\mathbf b_m(\Phi)=0$ if and only if~$\Phi$ is $m$-adapted.
\end{definition}

\begin{lemma}[Sign-adaptation]\label{lem:adaptation}
Let~$\Phi$ be a smooth complete fan and $m \in M$ primitive. There is a
$B$-canonical finite sequence of star subdivisions along $m$-bad two-cones
that produces a smooth $m$-adapted fan. Each star subdivision inserts
$u + v$ for a bad two-cone~$\langle u, v \rangle$ and is the blow-up of
a smooth codimension-two invariant center.
\end{lemma}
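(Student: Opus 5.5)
We run the following algorithm and show that it terminates by induction on the badness profile $\mathbf b_m$ of Definition~\ref{def:badness-profile}. While $B_m(\Phi)\neq\emptyset$, choose a bad two-cone $\tau=\langle u,v\rangle$ of maximal $m$-weight. Among all such cones of that weight, take the one whose ordered pair of generators is smallest in the lexicographic order induced by the fixed basis $B$; this makes the choice $B$-canonical. Then replace $\Phi$ by its star subdivision $\Phi'$ at $w=u+v$. Since $\tau$ is smooth, every cone $\sigma\supseteq\tau$ has generators $u,v,x_1,\dots,x_k$ that extend to a lattice basis. The two new cones $\langle w,v,x\rangle$ and $\langle u,w,x\rangle$ again have unimodular generator sets. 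Hence $\Phi'$ is smooth and complete with the same support, and $X_{\Phi'}\to X_\Phi$ is the blow-up of the smooth codimension-two orbit closure $V(\tau)$, as recalled in Section~\ref{sec:conventions}. The bound $D$ may be taken as the maximal weight in the initial fan, because we will show weights never increase.

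\textbf{Key step: comparing bad two-cones of $\Phi$ and $\Phi'$.} Write $a=m(u)>0>m(v)=b$, after swapping $u$ and $v$ if necessary, so that $m(w)=a+b$ and $|m(w)|<\max(|a|,|b|)$. The two-cones that disappear are $\tau$ itself. Two-cones not containing $\tau$ are unchanged. The new two-cones are $\langle u,w\rangle$ and $\langle v,w\rangle$, together with $\langle w,x\rangle$ for every ray $x$ such that $\langle u,v,x\rangle\in\Phi(3)$. We check the weight of each new bad cone as follows.
\begin{itemize}[label=$\bullet$]
\item Exactly one of $\langle u,w\rangle$ and $\langle v,w\rangle$ can be bad, and only when $a+b\neq 0$. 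Its weight is $|a|+|b|+|a+b|-\min(|a|,|b|)$ minus the appropriate term. Concretely, if $a+b<0$ the bad cone is $\langle u,w\rangle$, with weight $a+|a+b|=a-a-b=|b|<w_m(\tau)$. The case $a+b>0$ is symmetric.
\item For $\langle w,x\rangle$ to be bad we need $m(x)\neq 0$ with sign opposite to $m(w)$, say $m(w)>0>m(x)$. Then $\langle u,x\rangle$ was already a bad cone of $\Phi$. By maximality of $\tau$, its weight satisfies $a+|m(x)|\le a+|b|$, hence $|m(x)|\le |b|$. The new weight is $(a+b)+|m(x)|\le a<w_m(\tau)$, because $b\neq 0$.
\end{itemize}
So every newly created bad cone has weight strictly less than $k=w_m(\tau)$. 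Meanwhile $c_k$ drops by one and no coordinate $c_j$ with $j>k$ changes. Hence $\mathbf b_m(\Phi')<\mathbf b_m(\Phi)$ lexicographically. Lexicographic order on $\ZZ_{\ge0}^D$ is a well-order, so the process terminates. At termination $\mathbf b_m=0$, which means the resulting fan is $m$-adapted.

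\textbf{Main obstacle.} The delicate point is the second bullet: a new two-cone $\langle w,x\rangle$ could a priori be heavier than $\tau$. Controlling it requires the maximal-weight choice together with the observation that $\langle u,x\rangle$ or $\langle v,x\rangle$ is an old two-face of the same three-cone. This face is bad precisely when $\langle w,x\rangle$ is, so its weight is already bounded by $w_m(\tau)$. One must also check the degenerate cases $m(x)=0$ and $a+b=0$. These produce no new bad cones, since $m$ takes the value $0$ on one generator.
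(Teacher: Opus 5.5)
Your proposal is correct and follows the paper's proof essentially step for step. It uses the same maximal-weight choice with a lexicographic tie-break, the same unimodularity argument for the new cones, and the same key estimate: a new cone $\langle u+v,x\rangle$ is bounded via the old bad face $\langle u,x\rangle$ (or $\langle v,x\rangle$) and the maximality of $w_m(\tau)$, which gives the strict lexicographic drop of $\mathbf b_m$. The only blemish is the garbled expression ``$|a|+|b|+|a+b|-\min(|a|,|b|)$ minus the appropriate term'' in your first bullet; delete it, since the concrete computation right after it is correct.
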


\begin{proof}
\textit{Algorithm.}
Choose an $m$-bad two-cone of maximal weight.  Ties are broken by
ordering each two-cone $\langle u, v \rangle$ as the pair
$(\min(u,v),\, \max(u,v))$ under the lexicographic order on~$N$
induced by the fixed basis, and choosing the lexicographically first
such pair among the bad two-cones of maximal weight. Star-subdivide
the chosen cone
$\langle u,v\rangle$ by inserting the ray through $s=u+v$.  Repeat
until no bad two-cone remains. Since $\langle u,v\rangle$ is smooth,
$u$ and $v$ are part of a lattice basis; hence $u+v$ is primitive.

\smallskip

\textit{Smoothness.}
If $\sigma = \langle u, v, w_1, \ldots, w_k \rangle$ is a cone
containing~$\langle u, v \rangle$, then
$u, v, w_1, \ldots, w_k$ are part of a lattice basis. The cones
$\langle u, s, w_1, \ldots, w_k \rangle$ and
$\langle s, v, w_1, \ldots, w_k \rangle$ have generator matrices
obtained by elementary column operations from this basis, hence are
smooth. Thus every intermediate fan is smooth and complete, and the
toric morphism is the blow-up of the smooth invariant subvariety
$V(\langle u,v\rangle)$ of codimension two.

\smallskip

\textit{Termination.}
Write $m(u)=a>0$ and $m(v)=-b$ with $b>0$, after interchanging $u$ and
$v$ if necessary, and put $W=a+b$.  Then $m(s)=a-b$. The old bad cone
$\langle u,v\rangle$ disappears.  The two new cones
$\langle u,s\rangle$ and $\langle s,v\rangle$ are either not bad or
have weight strictly smaller than~$W$: if $a>b$, then
$w_m(\langle s,v\rangle)=a<W$, while if $a=b$, the new ray has
$m(s)=0$ and creates no bad cone of this type; the case $b>a$ is
symmetric.

It remains to check the other new two-cones. Let $w$ be a ray such
that $\langle s,w\rangle$ is a new two-cone. Then $w$ occurs in a cone
of the old fan containing $\langle u,v\rangle$. Since the old fan is
simplicial, both $\langle u,w\rangle$ and $\langle v,w\rangle$ were
old two-cones. Suppose first that $a>b$ and $\langle s,w\rangle$ is
bad.  Then $m(s)>0$ and $m(w)<0$, so $\langle u,w\rangle$ was an old
$m$-bad cone. By maximality of $W$, its weight satisfies
$a+|m(w)|\le W=a+b$, hence $|m(w)|\le b$.  Therefore
\[
  w_m(\langle s,w\rangle)=(a-b)+|m(w)|\le a<W.
\]
If $b>a$, the same argument with $u$ and $v$ interchanged shows that
any new bad cone $\langle s,w\rangle$ has weight $<W$. If $a=b$, then
$m(s)=0$, so no cone containing $s$ is bad. Thus the subdivision removes one bad two-cone of maximal weight~$W$ and
creates no bad two-cone of weight at least~$W$. In the notation of
Definition~\ref{def:badness-profile}, choose $D$ to be the largest bad
weight present at the beginning of the current $m$-round. No later step
can create a bad cone of weight exceeding~$D$. At a step of maximal
current weight~$W$, the coefficient $c_W$ decreases by at least one, no
coefficient $c_k$ with $k>W$ increases, and only coefficients $c_k$ with
$k<W$ can otherwise change. Hence the badness profile
$\mathbf b_m(\Phi)$ strictly decreases lexicographically after each
star subdivision. Since $\mathbf b_m(\Phi)\in\ZZ_{\ge0}^{D}$ and lexicographic order on a
finite product of well-ordered sets has no infinite strictly decreasing
chain, the $m$-round terminates. Its terminal state has
$\mathbf b_m=0$, equivalently no $m$-bad two-cones.
\end{proof}

\begin{figure}[t]
\centering
\begin{tikzpicture}[scale=0.95,>=Stealth]
  \begin{scope}[shift={(-4.1,0)}]
    \node[above] at (0,2.55) {\textbf{Before}};
    \fill[blue!12] (0,0) -- (1.5,1.8) -- (1.2,-1.5) -- cycle;
    \draw[dashed,red!60!black,thick] (-2.05,0) -- (2.05,0);
    \node[red!60!black,font=\small,above] at (-1.55,0.08) {$\ker(m)$};
    \draw[->,very thick,blue!70!black] (0,0) -- (1.5,1.8)
      node[right,font=\small] {$u$};
    \draw[->,very thick,blue!70!black] (0,0) -- (1.2,-1.5)
      node[right,font=\small] {$v$};
    \node[font=\small] at (1.65,0.35) {$m{>}0$};
    \node[font=\small] at (1.35,-0.85) {$m{<}0$};
  \end{scope}
  \draw[->,very thick] (-1.25,0) -- (1.25,0)
    node[midway,above,font=\small] {insert $s=u+v$};
  \begin{scope}[shift={(4.1,0)}]
    \node[above] at (0,2.55) {\textbf{After}};
    \fill[green!10] (0,0) -- (1.5,1.8) -- (2.7,0.3) -- cycle;
    \fill[orange!10] (0,0) -- (2.7,0.3) -- (1.2,-1.5) -- cycle;
    \draw[dashed,red!60!black,thick] (-2.05,0) -- (2.05,0);
    \node[red!60!black,font=\small,above] at (-1.55,0.08) {$\ker(m)$};
    \draw[->,very thick,blue!70!black] (0,0) -- (1.5,1.8)
      node[right,font=\small] {$u$};
    \draw[->,very thick,blue!70!black] (0,0) -- (1.2,-1.5)
      node[right,font=\small] {$v$};
    \draw[->,line width=1.35pt,green!30!black] (0,0) -- (2.7,0.3)
      node[above right,font=\small,text=green!30!black] {$s{=}u{+}v$};
  \end{scope}
\end{tikzpicture}
\caption{%
  The sign-adaptation step: an $m$-bad two-cone crossing $\ker(m)$ is
  subdivided by inserting~$s = u + v$, reducing the $m$-weight}
\label{fig:adaptation}
\end{figure}

\section{Sequential adaptation and monotonicity}\label{sec:sequential}

The key observation that makes sequential adaptation possible is that
resolving badness for one wall normal cannot reintroduce badness for a
wall normal already handled.

\begin{lemma}[Monotonicity]\label{lem:monotonicity}
If a smooth fan~$\Phi$ is $m$-adapted, then any star subdivision of a
two-cone of~$\Phi$ at the sum of its generators preserves
$m$-adaptation.
\end{lemma}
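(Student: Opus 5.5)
The plan is to reduce the statement to a sign check on the new two-cones only. Let $\tau=\langle u,v\rangle\in\Phi(2)$ be the cone being subdivided and $s=u+v$. The star subdivision removes $\tau$ and every cone containing it. It inserts cones spanned by $s$ together with subsets of the generators of an old cone containing $\tau$, with at least one of $u,v$ omitted. Hence every two-cone of the new fan is of one of two kinds. Either it is an old two-cone other than $\tau$, or it is $\langle s,w\rangle$ where $w\in\{u,v\}$ or $w$ is a generator of some old cone $\sigma\supseteq\tau$. Old two-cones other than $\tau$ keep their generators, so they remain non-bad by hypothesis. It therefore suffices to show that no $\langle s,w\rangle$ is $m$-bad.

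For this I would use Remark~\ref{rem:halfspace}. Take an old cone $\sigma=\langle u,v,w_1,\ldots,w_k\rangle$ containing $\tau$. Since $\Phi$ is $m$-adapted and $\sigma$ is simplicial, all of $m(u),m(v),m(w_1),\ldots,m(w_k)$ lie in one closed half-line, say all $\ge 0$ (the other case is symmetric). Then $m(s)=m(u)+m(v)$ has the same weak sign. Each new cone $\langle s,w\rangle$ arising from $\sigma$ therefore has both $m$-values $\ge 0$, so $m(s)\,m(w)\ge 0$ and it is not bad. The cones $\langle u,s\rangle$ and $\langle s,v\rangle$ are covered by the same argument with $\sigma=\tau$ itself.

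The only point needing care is that a new two-cone $\langle s,w\rangle$ with $w\notin\{u,v\}$ really comes from a single old cone $\sigma$ containing both $\tau$ and $w$. This holds because new cones are precisely $\langle s\rangle+\rho$ with $\rho$ a face of some $\sigma\supseteq\tau$ not containing $\tau$, and $\langle s,w\rangle$ is a face of such a cone. This is the same observation already used in the termination part of Lemma~\ref{lem:adaptation}. After that, the sign argument is immediate. Smoothness of the new fan is already established in Lemma~\ref{lem:adaptation}, so the conclusion holds for $m$-adaptation in the sense of Definition~\ref{def:bad}.
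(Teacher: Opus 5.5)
Your proof is correct and follows essentially the same route as the paper. Both arguments use Remark~\ref{rem:halfspace} to place all generators of each old cone $\sigma\supseteq\langle u,v\rangle$ in one closed half-space, note that $m(s)=m(u)+m(v)$ stays in that half-space, and conclude that no new two-cone $\langle s,w\rangle$ is $m$-bad. Your explicit list of the two-cones of the subdivided fan (old two-cones other than $\tau$, and cones $\langle s,w\rangle$ with $w$ a ray of some $\sigma\supseteq\tau$) makes precise a step the paper leaves implicit.
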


\begin{proof}
Since~$\Phi$ is $m$-adapted, every cone has all its ray generators in
one of the closed half-spaces $\{m \ge 0\}$ or $\{m \le 0\}$. Let
$\sigma \supset \langle u, v \rangle$ be a cone of~$\Phi$.  All rays
of~$\sigma$ lie in one such closed half-space. The new
ray~$s = u + v$ satisfies $m(s) = m(u) + m(v)$, which lies in the same
closed half-space (the sum of two nonnegative numbers is nonnegative;
the sum of two nonpositive numbers is nonpositive). Therefore every
new cone produced by the star subdivision, including cones of the form
$\langle s, w \rangle$ for other rays~$w$ of~$\sigma$, has all its
generators in one closed half-space. Hence no new $m$-bad two-cone is
created.
\end{proof}

\begin{construction}[Sequential wall-adaptation]\label{cons:sequential}
Order $\cM_\Sig = \{m_1, \ldots, m_s\}$ lexicographically.
Set $\Phi_0 = \Sig$. For $j = 1, \ldots, s$, let~$\Phi_j$ be the
result of applying Lemma~\ref{lem:adaptation} to~$\Phi_{j-1}$ with wall
normal~$m_j$.  Define $\Gam(\Sig) := \Phi_s$.
\end{construction}

\begin{remark}[The sequential invariant]\label{rem:sequential-invariant}
During the $j$th round, the invariant is best viewed as a prefix together
with a current badness profile:
\[
  \bigl(\text{adapted to }m_1,\ldots,m_{j-1};\ \mathbf b_{m_j}(\Phi)\bigr).
\]
Lemma~\ref{lem:monotonicity} says that later subdivisions preserve the
adapted prefix, while Lemma~\ref{lem:adaptation} says that the current
profile strictly decreases until it is zero. Thus the full construction
is a finite sequence of profile descents, one wall normal at a time.
\end{remark}

\begin{proposition}\label{prop:Gamma}
$\Gam(\Sig)$ is smooth and complete, with
$\Gam(\Sig) \preceq \cA_\Sig \preceq \Sig$. The morphism $X_{\Gam(\Sig)} \to X_\Sig$ is a finite sequence of
blow-ups along smooth codimension-two invariant centers.
\end{proposition}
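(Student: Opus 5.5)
The plan is to assemble the proposition from Lemma~\ref{lem:adaptation}, Lemma~\ref{lem:monotonicity} and Lemma~\ref{lem:wall-properties}, in three steps.

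\textbf{Smoothness, completeness and the blow-up description.} We induct on the rounds $j=1,\dots,s$ of Construction~\ref{cons:sequential}. Each round applies Lemma~\ref{lem:adaptation} to the smooth complete fan $\Phi_{j-1}$. That lemma gives a finite sequence of star subdivisions at $u+v$ for smooth two-cones $\langle u,v\rangle$. Each such subdivision keeps the fan smooth and complete and is the blow-up of the smooth codimension-two orbit closure $V(\langle u,v\rangle)$. Since there are finitely many rounds, each finite, composing gives $X_{\Gam(\Sig)}\to X_\Sig$ as a finite sequence of such blow-ups. Each star subdivision refines the previous fan, so $\Gam(\Sig)\preceq\Sig$ follows by transitivity. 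The relation $\cA_\Sig\preceq\Sig$ is Lemma~\ref{lem:wall-properties}.

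\textbf{Adaptation to every wall normal.} By Lemma~\ref{lem:adaptation}, $\Phi_j$ is $m_j$-adapted. Every later fan $\Phi_k$ with $k>j$ is obtained from $\Phi_j$ by star subdivisions of two-cones at the sum of their generators. Applying Lemma~\ref{lem:monotonicity} once per subdivision shows that $m_j$-adaptation is preserved. Hence $\Gam(\Sig)=\Phi_s$ is $m_i$-adapted for all $i$.

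\textbf{Refinement of $\cA_\Sig$.} This is the main step. Take a maximal cone $\sigma$ of $\Gam(\Sig)$. For each $i$, Remark~\ref{rem:halfspace} puts all ray generators of $\sigma$, and hence all of $\sigma$, in one closed half-space $\{\epsilon_i m_i\ge 0\}$ with $\epsilon_i=\pm1$. Therefore
\[
\sigma\subseteq C:=\bigcap_{i=1}^{s}\{\epsilon_i m_i\ge 0\}.
\]
Since $\sigma$ is full-dimensional, $C$ has nonempty interior. A full-dimensional set of this sign form is exactly the closure of a chamber of the arrangement, that is, a maximal cone of $\cA_\Sig$.

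Lower-dimensional cones of $\Gam(\Sig)$ are faces of maximal cones, because a complete simplicial fan is pure. They are therefore contained in the same cones of $\cA_\Sig$. This gives $\Gam(\Sig)\preceq\cA_\Sig$.

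The only subtlety is that, if some generators lie on $\ker m_i$, the sign $\epsilon_i$ is not determined by them. It is, however, determined by any generator off the wall. If every generator lies on the wall, then $\sigma\subseteq\ker m_i$, which is impossible for a full-dimensional cone. So every choice of $\epsilon_i$ compatible with the generators gives a valid containment.
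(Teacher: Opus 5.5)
Your proposal is correct and follows the paper's proof. Smoothness, completeness and the blow-up description come from Lemma~\ref{lem:adaptation}, adaptation to every $m_i$ comes from Lemma~\ref{lem:monotonicity}, and $\Gam(\Sig)\preceq\cA_\Sig$ comes from Remark~\ref{rem:halfspace}. Your one variation is to handle maximal cones first, so the sign-intersection is visibly a chamber closure, and then pass to faces; the paper applies the half-space argument to every cone at once.

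The paper's proof also notes that each star subdivision is a projective morphism, so $X_{\Gam(\Sig)}\to X_\Sig$ is projective. This is not in the displayed statement, but Lemma~\ref{lem:gamma-proj} later cites this proposition for it, so you should add that line.
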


\begin{proof}
By Lemma~\ref{lem:adaptation}, each~$\Phi_j$ is smooth and complete.
By Lemma~\ref{lem:monotonicity}, adapting to~$m_{j+1}$ preserves
adaptation to $m_1, \ldots, m_j$. So $\Gam(\Sig)$
is adapted to every $m_i$.  By Remark~\ref{rem:halfspace}, for each
cone~$\sigma \in \Gam(\Sig)$ and each wall normal~$m_i$, all ray
generators of~$\sigma$ lie in one of the half-spaces
$\{m_i \ge 0\}$ or $\{m_i \le 0\}$. Therefore~$\sigma$ is contained
in the intersection of one chosen closed half-space for every~$m_i$,
which is a cone of~$\cA_\Sig$.  Therefore
$\Gam(\Sig) \preceq \cA_\Sig$.

Finally, by Lemma~\ref{lem:adaptation} each step of
Construction~\ref{cons:sequential} is a star subdivision, hence a toric
blow-up of a smooth invariant center of codimension two, and such a
blow-up is a projective morphism~\citep[\S 11.1]{CLS}. As a composition
of projective morphisms, $X_{\Gam(\Sig)} \to X_\Sig$ is projective.
\end{proof}

\section{Projectivity of \texorpdfstring{$\Gam(\Sig)$}{Gamma(Sigma)}}\label{sec:projgamma}

The fan~$\Gam(\Sig)$ is smooth, complete, and refines the projective
arrangement fan~$\cA_\Sig$. We show that it is itself projective, so that
the construction terminates at~$\Gam(\Sig)$ with no further work. The
mechanism is the standard one by which an ample class on a base and a
relatively ample class for a morphism combine to an ample class on the
total space; here both ingredients are visible at the level of support
functions, and the linearity of the wall-bend functional makes the
combination completely explicit. The argument is a small general sandwich lemma: the support function on
$\Gam$ is built by combining an ample function pulled back from the
intermediate projective fan~$\cA$ with a small relatively ample
perturbation over~$\Sig$. Throughout this section, until the final
application, let~$\Sig$ be a complete fan, let $\cA$ be a complete
projective fan with $\cA\preceq\Sig$, and let $\Gam$ be a smooth complete
fan with $\Gam\preceq\cA$ such that the refinement morphism
$X_\Gam\to X_\Sig$ is projective. In the application,
$\cA=\cA_\Sig$ and $\Gam=\Gam(\Sig)$.

We briefly recall the wall-bend formalism from~\citep[\S 6.4]{CLS}.
Let $\Phi$ be a smooth complete fan in~$N_\RR$ and let
$h\colon N_\RR\to\RR$ be a $\Phi$-linear support function, that is, a
function whose restriction to each maximal cone is linear. Let
$\tau\in\Phi(n-1)$ be a wall, shared by the two maximal cones
$\sigma_+=\langle\tau,a\rangle$ and $\sigma_-=\langle\tau,b\rangle$,
where $a$ and $b$ are the generators of $\sigma_+$ and $\sigma_-$ not
lying on~$\tau$, and $\tau=\langle r_1,\dots,r_{n-1}\rangle$. Because
$\Phi$ is smooth, the $n+1$ primitive generators
$a,b,r_1,\dots,r_{n-1}$ satisfy a unique integral wall relation
\begin{equation}\label{eq:wallrel}
  a+b=\sum_{i=1}^{n-1} c_i\,r_i,\qquad c_i\in\ZZ,
\end{equation}
in which the coefficients of $a$ and $b$ are both~$1$; this is the smooth
case of the wall relation~\citep[\S 6.4]{CLS}.  Define the \emph{bend} of
$h$ across $\tau$ by
\begin{equation}\label{eq:bend}
  \delta_\tau(h):=\sum_{i=1}^{n-1} c_i\,h(r_i)-h(a)-h(b).
\end{equation}
By construction $\delta_\tau$ is a \emph{linear} functional of the values
of~$h$ on the rays of~$\Phi$. Writing $m_+,m_-\in M_\RR$ for the linear
forms with $h|_{\sigma_\pm}=m_\pm$, and using~\eqref{eq:wallrel} together
with $m_+|_\tau=m_-|_\tau$, one obtains the equivalent expression
\begin{equation}\label{eq:bend2}
  \delta_\tau(h)=(m_+-m_-)(b),
\end{equation}
which is well defined because $m_+-m_-$ vanishes on~$\Span(\tau)$. We fix
the orientation of~\eqref{eq:bend} for which a strictly upper convex
support function has all bends positive; with this convention, $h$ is a
strictly upper convex support function for~$\Phi$, equivalently the
support function of an ample divisor on the projective variety~$X_\Phi$,
if and only if $\delta_\tau(h)>0$ for every wall $\tau\in\Phi(n-1)$
\citep[Thm.~6.1.14]{CLS}. The same functional governs the relative
notion: a $\Gam$-linear support function $g$ is \emph{strictly convex
relative to~$\Sig$} if $\delta_\tau(g)>0$ for every wall
$\tau\in\Gam(n-1)$ whose relative interior lies in the interior of a
maximal cone of~$\Sig$, and a refinement morphism $X_\Gam\to X_\Sig$ is
projective if and only if such a~$g$ exists~\citep[\S 6.1, \S 7.2]{CLS}.

With this setup in hand, we distinguish two families of walls.
We write $\operatorname{relint}$ for relative interior in the linear span
of a cone, and $\operatorname{int}$ for interior in~$N_\RR$. Since
$\Gam\preceq\cA$, the relative interior of each wall
$\tau\in\Gam(n-1)$ is contained in the relative interior of a unique
cone~$\delta$ of~$\cA$, and $\dim\delta\ge\dim\tau=n-1$, so
$\dim\delta\in\{n-1,n\}$. Call $\tau$ \emph{inherited} if
$\dim\delta=n-1$, in which case $\tau$ lies in a wall of~$\cA$, and
\emph{interior} if $\dim\delta=n$, in which case the relative interior
of~$\tau$ lies in the interior of a maximal cone of~$\cA$. Every wall
of~$\Gam$ is of exactly one of these two types.

\begin{lemma}\label{lem:hbend}
Let $h$ be a strictly upper convex support function for~$\cA$, regarded
as a $\Gam$-linear support function via $\Gam\preceq\cA$. Then
$\delta_\tau(h)>0$ for every inherited wall~$\tau$, while
$\delta_\tau(h)=0$ for every interior wall~$\tau$.
\end{lemma}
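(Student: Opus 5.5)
We will work throughout with the intrinsic form \eqref{eq:bend2} of the bend, $\delta_\tau(h)=(m_+-m_-)(b)$, where $m_\pm$ are the linear forms of $h$ on the two maximal cones $\sigma_\pm$ of~$\Gam$ adjacent to~$\tau$, and $b$ is the generator of $\sigma_-$ not on~$\tau$. Since $\Gam\preceq\cA$, each maximal cone $\sigma_\pm$ of~$\Gam$ lies in a maximal cone $C_\pm$ of~$\cA$. Because $h$ is $\cA$-linear, $h|_{\sigma_\pm}$ is the restriction of the linear form $m_{C_\pm}$ of $h$ on~$C_\pm$, so $m_\pm=m_{C_\pm}$. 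The whole proof therefore reduces to identifying $C_+$ and $C_-$.

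\emph{Interior walls.} Suppose $\operatorname{relint}\tau\subset\operatorname{int}C$ for a maximal cone $C$ of~$\cA$. Pick a point $p\in\operatorname{relint}\tau$. A small ball around $p$ lies in $C$, and it meets $\operatorname{int}\sigma_+$ and $\operatorname{int}\sigma_-$. The cone $\sigma_+$ lies in some maximal cone of~$\cA$, and that cone has interior points in common with $C$; since $\cA$ is a fan, this forces it to equal $C$. So $C_+=C=C_-$, hence $m_+=m_-$ and $\delta_\tau(h)=0$.

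\emph{Inherited walls.} Now suppose $\operatorname{relint}\tau\subset\operatorname{relint}\delta$ with $\delta\in\cA(n-1)$. Let $C_1,C_2$ be the two maximal cones of~$\cA$ sharing the wall~$\delta$. Near a point $p\in\operatorname{relint}\tau$, the hyperplane $\Span\delta=\Span\tau$ separates $\sigma_+$ from $\sigma_-$, and also separates $C_1$ from $C_2$. Hence, after relabeling, $\operatorname{int}\sigma_+$ meets $\operatorname{int}C_1$ and $\operatorname{int}\sigma_-$ meets $\operatorname{int}C_2$. By the argument above this gives $\sigma_+\subset C_1$ and $\sigma_-\subset C_2$.

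We then have $\delta_\tau(h)=(m_{C_1}-m_{C_2})(b)$ with $b\in\sigma_-\subset C_2$ and $b\notin\Span\delta$. Strict upper convexity of $h$ on $\cA$ means $m_{C_1}(x)>h(x)=m_{C_2}(x)$ for $x\in C_2\setminus\delta$ near the wall; more precisely, $m_{C_1}-m_{C_2}$ is a nonzero form vanishing on $\Span\delta$ and strictly positive on the $C_2$ side. Alternatively, one can apply \citep[Thm.~6.1.14]{CLS} to $\cA$ directly via the same formula \eqref{eq:bend2} for the wall $\delta$ of~$\cA$. Since $b$ lies strictly on the $C_2$ side of $\Span\delta$, we get $\delta_\tau(h)>0$.

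\emph{Main obstacle.} The delicate point is the bookkeeping of signs and conventions. One must check that the orientation fixed for \eqref{eq:bend} (positive bends for strictly convex functions) matches "$m_{C_1}-m_{C_2}>0$ on the $C_2$ side". One must also check that \eqref{eq:bend2} is valid when $\cA$ is only simplicial-free, i.e.\ not smooth. For the latter I would avoid invoking a wall relation on~$\cA$ at all. Instead I would use the characterization of strict convexity of a piecewise-linear function on the complete fan $\cA$: for adjacent maximal cones $C_1, C_2$, the form $m_{C_1}$ exceeds $m_{C_2}$ on $C_2\setminus(C_1\cap C_2)$. This reduces the positivity claim to evaluating that inequality at the single vector~$b$, which lies in $C_2$ but not in $C_1\cap C_2\subset\Span\delta$.
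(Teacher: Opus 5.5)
Your proposal is correct and follows essentially the same route as the paper: you use the fan property to place $\sigma_\pm$ inside maximal cones of $\cA$, then evaluate $\delta_\tau(h)=(m_+-m_-)(b)$, which is $0$ when both cones lie in the same chamber and strictly positive otherwise by strict convexity across the wall of $\cA$. The only cosmetic difference is that the paper handles the interior case through the wall relation \eqref{eq:wallrel} rather than \eqref{eq:bend2}; like you, it avoids any wall relation on the possibly non-smooth $\cA$ and instead uses the direct characterization of strict convexity across $W$.
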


\begin{proof}
Let $\tau\in\Gam(n-1)$ have adjacent maximal cones
$\sigma_+=\langle\tau,a\rangle$ and $\sigma_-=\langle\tau,b\rangle$ as
above. Since $\Gam\preceq\cA$, each of $\sigma_+,\sigma_-$ is contained
in a maximal cone of~$\cA$.

Suppose first that $\tau$ is interior, with
$\operatorname{relint}(\tau)\subset\operatorname{int}(\alpha)$ for some
$\alpha\in\cA(n)$. A relative-interior point of~$\tau$ has, inside each
of $\sigma_+$ and $\sigma_-$, a one-sided neighborhood lying in
$\operatorname{int}(\alpha)$. As each of $\sigma_\pm$ is contained in a
single maximal cone of~$\cA$ and meets $\operatorname{int}(\alpha)$, we
conclude $\sigma_+\cup\sigma_-\subseteq\alpha$. Hence $h$ is linear on
$\sigma_+\cup\sigma_-$, say $h=m_\alpha$ there, and by~\eqref{eq:bend}
and~\eqref{eq:wallrel}
\[
  \delta_\tau(h)=m_\alpha\!\Bigl(\textstyle\sum_i c_i r_i\Bigr)
  -m_\alpha(a)-m_\alpha(b)
  =m_\alpha(a+b)-m_\alpha(a)-m_\alpha(b)=0.
\]

Suppose now that $\tau$ is inherited, with
$\operatorname{relint}(\tau)\subset\operatorname{relint}(W)$ for a
wall $W\in\cA(n-1)$. Let $\alpha_+,\alpha_-\in\cA(n)$ be the two
maximal cones adjacent along~$W$, labelled so that $\alpha_+$ lies on the
$\sigma_+$ side and $\alpha_-$ lies on the $\sigma_-$ side of
$\Span(W)$. Since $\tau$ is the common facet of the distinct maximal
cones $\sigma_+,\sigma_-$ of~$\Gam$, these lie on opposite sides of the
hyperplane $\Span(\tau)=\Span(W)$, hence $\sigma_+\subseteq\alpha_+$ and
$\sigma_-\subseteq\alpha_-$. Thus $h|_{\sigma_+}=m_{\alpha_+}$ and
$h|_{\sigma_-}=m_{\alpha_-}$, and by~\eqref{eq:bend2}
\[
  \delta_\tau(h)=(m_{\alpha_+}-m_{\alpha_-})(b).
\]
Strict upper convexity of $h$ across the genuine wall~$W$ of~$\cA$ means
exactly that $m_{\alpha_+}-m_{\alpha_-}$ is a nonzero linear form
vanishing on $\Span(W)$ and taking strictly positive values on the open
$\alpha_-$ side of $\Span(W)$. The ray~$b$ lies strictly on that side:
it is a generator of $\sigma_-\subseteq\alpha_-$ not contained in
$\Span(\tau)=\Span(W)$, for otherwise $\sigma_-=\langle\tau,b\rangle$
would lie in the hyperplane $\Span(W)$, contradicting
$\dim\sigma_-=n$. Therefore $\delta_\tau(h)=(m_{\alpha_+}-m_{\alpha_-})(b)>0$.
\end{proof}

\begin{lemma}\label{lem:gbend}
Let $g$ be a $\Gam$-linear support function that is strictly convex
relative to~$\Sig$.  Then $\delta_\tau(g)>0$ for every interior
wall~$\tau$.
\end{lemma}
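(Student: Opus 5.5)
The plan is to reduce the claim directly to the definition of relative strict convexity. A $\Gam$-linear support function $g$ that is strictly convex relative to~$\Sig$ has $\delta_\tau(g)>0$ for every wall $\tau\in\Gam(n-1)$ whose relative interior lies in the interior of a maximal cone of~$\Sig$. So it suffices to show that every interior wall of~$\Gam$ (relative to~$\cA$) has this property relative to~$\Sig$.

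The key step uses $\cA\preceq\Sig$. Let $\tau$ be interior, so $\operatorname{relint}(\tau)\subset\operatorname{int}(\alpha)$ for some $\alpha\in\cA(n)$. Since $\cA$ refines $\Sig$, the maximal cone $\alpha$ is contained in some cone $\sigma\in\Sig$. This $\sigma$ must be maximal, because $\alpha$ is full-dimensional. The interior of a subset is contained in the interior of the superset, so $\operatorname{int}(\alpha)\subseteq\operatorname{int}(\sigma)$. Therefore $\operatorname{relint}(\tau)\subset\operatorname{int}(\sigma)$, and $\tau$ is one of the walls on which relative strict convexity imposes $\delta_\tau(g)>0$.

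I do not expect a real obstacle; the argument is the observation that interior walls of the finer pair $\Gam\preceq\cA$ are a fortiori interior relative to the coarser fan~$\Sig$. The only care needed is bookkeeping about which notion of interior is meant. Here $\operatorname{int}$ means interior in~$N_\RR$, and the relative-convexity condition in the recalled definition is phrased with exactly this notion, so the containment $\operatorname{int}(\alpha)\subseteq\operatorname{int}(\sigma)$ closes the argument.

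One could add a remark that the converse fails: some walls of $\Gam$ interior to maximal cones of~$\Sig$ are inherited from walls of~$\cA$, and there $g$ is also positive. Only the stated direction is needed for the sandwich argument, where $h+\epsilon g$ will be shown to have positive bends everywhere.
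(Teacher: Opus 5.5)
Your proposal is correct and matches the paper's proof essentially step for step. Both use $\cA\preceq\Sig$ to place $\alpha$ inside a maximal cone $\sigma$ of $\Sig$, deduce $\operatorname{int}(\alpha)\subseteq\operatorname{int}(\sigma)$, and then apply the defining property of relative strict convexity to the wall $\tau$.
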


\begin{proof}
Let $\tau$ be interior, with
$\operatorname{relint}(\tau)\subset\operatorname{int}(\alpha)$ for some
$\alpha\in\cA(n)$.  Since $\cA\preceq\Sig$ by hypothesis,
$\alpha$ is contained in a maximal cone~$\sigma\in\Sig(n)$. Both $\alpha$ and $\sigma$ are
full-dimensional and $\alpha\subseteq\sigma$, so
$\operatorname{int}(\alpha)\subseteq\operatorname{int}(\sigma)$; indeed
$\operatorname{int}(\alpha)$ is open in $N_\RR$ and contained
in~$\sigma$, hence contained in the largest open subset
$\operatorname{int}(\sigma)$ of~$\sigma$. Therefore
$\operatorname{relint}(\tau)\subset\operatorname{int}(\sigma)$, that is,
$\tau$ is a wall of~$\Gam$ whose relative interior lies in the interior
of a maximal cone of~$\Sig$. By the defining property of relative strict
convexity, $\delta_\tau(g)>0$.
\end{proof}

\begin{lemma}[Sandwich projectivity]\label{lem:sandwich}
Under the hypotheses of this section, the fan~$\Gam$ is projective.
\end{lemma}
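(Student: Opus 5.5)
The plan is to build an explicit strictly upper convex support function on~$\Gam$ of the form $f=h+\varepsilon g$. Here $h$ is a strictly upper convex support function for the projective fan~$\cA$, pulled back to~$\Gam$ as in Lemma~\ref{lem:hbend}. The function $g$ is a $\Gam$-linear support function that is strictly convex relative to~$\Sig$; it exists because the refinement morphism $X_\Gam\to X_\Sig$ is projective, by the criterion recalled from \citep[\S 6.1, \S 7.2]{CLS}. The number $\varepsilon>0$ is to be chosen small. Both $h$ and $g$ are $\Gam$-linear, so $f$ is $\Gam$-linear. Since $\Gam$ is smooth and complete, the criterion \citep[Thm.~6.1.14]{CLS} reduces projectivity of~$\Gam$ to checking $\delta_\tau(f)>0$ for every wall $\tau\in\Gam(n-1)$.

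The key observation is that $\delta_\tau$ is a linear functional of the ray values, so $\delta_\tau(f)=\delta_\tau(h)+\varepsilon\,\delta_\tau(g)$. I then split the walls of~$\Gam$ into the two types defined before Lemma~\ref{lem:hbend}.

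For an \emph{interior} wall, Lemma~\ref{lem:hbend} gives $\delta_\tau(h)=0$, and Lemma~\ref{lem:gbend} gives $\delta_\tau(g)>0$. Hence $\delta_\tau(f)=\varepsilon\,\delta_\tau(g)>0$ for every $\varepsilon>0$.

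For an \emph{inherited} wall, Lemma~\ref{lem:hbend} gives $\delta_\tau(h)>0$, but $\delta_\tau(g)$ may be negative. This is the only point needing care. Since $\Gam$ is a fan with finitely many cones, the set of inherited walls is finite. So I take
\[
  \varepsilon<\min\Bigl\{\frac{\delta_\tau(h)}{|\delta_\tau(g)|}:\tau\text{ inherited},\ \delta_\tau(g)<0\Bigr\},
\]
or any $\varepsilon>0$ if that set is empty. For this $\varepsilon$ we get $\delta_\tau(f)\ge\delta_\tau(h)-\varepsilon|\delta_\tau(g)|>0$ on every inherited wall.

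Combining the two cases, every wall of~$\Gam$ has positive bend for~$f$. Therefore $f$ is strictly upper convex on~$\Gam$, and $\Gam$ is projective.

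I do not expect a real obstacle. The substantive geometric inputs are already in Lemmas~\ref{lem:hbend} and~\ref{lem:gbend}. The remaining steps are finiteness of the wall set and a check that every wall is inherited or interior; the latter is noted in the text via the unique cone of~$\cA$ containing $\operatorname{relint}(\tau)$ in its relative interior. The one thing to verify carefully is that the bend-positivity criterion applies to $\Gam$-linear functions with real values. This is fine: positivity is an open condition, so one can perturb $f$ to rational values on the finitely many rays without losing positivity of the finitely many bends, and then clear denominators to get an integral ample support function.
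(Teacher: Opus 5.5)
Your proposal is correct and matches the paper's proof: the same $h+\varepsilon g$, linearity of $\delta_\tau$, Lemmas~\ref{lem:hbend} and~\ref{lem:gbend} for the interior walls, and a finite minimum over the inherited walls. The differences are cosmetic: the paper takes $\varepsilon_0=\min_\tau \delta_\tau(h)/(1+|\delta_\tau(g)|)$ over all inherited walls and chooses rational $h$, $g$, $\varepsilon$ instead of perturbing the final function.

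One small slip: for inherited walls with $\delta_\tau(g)>0$, your choice of $\varepsilon$ does not guarantee the middle step $\delta_\tau(h)-\varepsilon|\delta_\tau(g)|>0$ of your inequality chain. It does not matter, because there $\delta_\tau(f)\ge\delta_\tau(h)>0$ directly.
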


\begin{proof}
Since~$\cA$ is projective, fix a strictly upper convex support
function~$h$ for~$\cA$ and regard it as a $\Gam$-linear support function
via $\Gam\preceq\cA$. Since the morphism $X_\Gam\to X_\Sig$ is
projective, there is a $\Gam$-linear support function~$g$ that is strictly
convex relative to~$\Sig$~\citep[\S 6.1, \S 7.2]{CLS}. Set $h_\varepsilon:=h+\varepsilon g$ for $\varepsilon>0$.  Since the bend
functional~\eqref{eq:bend} is linear in its argument,
\[
  \delta_\tau(h_\varepsilon)=\delta_\tau(h)+\varepsilon\,\delta_\tau(g)
  \qquad\text{for every wall }\tau\in\Gam(n-1).
\]
If $\tau$ is interior, then $\delta_\tau(h)=0$ by Lemma~\ref{lem:hbend}
and $\delta_\tau(g)>0$ by Lemma~\ref{lem:gbend}, so
$\delta_\tau(h_\varepsilon)=\varepsilon\,\delta_\tau(g)>0$ for every
$\varepsilon>0$. If $\tau$ is inherited, then $\delta_\tau(h)>0$ by
Lemma~\ref{lem:hbend}, with no constraint on the sign of~$\delta_\tau(g)$. If there are no inherited walls, set $\varepsilon_0=1$; otherwise set
\[
  \varepsilon_0:=\min_{\tau\ \text{inherited}}
  \frac{\delta_\tau(h)}{1+|\delta_\tau(g)|}.
\]
This is positive, and for every $\varepsilon$ with
$0<\varepsilon<\varepsilon_0$ each inherited wall satisfies
\[
  \delta_\tau(h_\varepsilon)\ge\delta_\tau(h)-\varepsilon\,|\delta_\tau(g)|
  >\delta_\tau(h)\Bigl(1-\frac{|\delta_\tau(g)|}{1+|\delta_\tau(g)|}\Bigr)
  =\frac{\delta_\tau(h)}{1+|\delta_\tau(g)|}>0.
\]
Thus $\delta_\tau(h_\varepsilon)>0$ for every wall of~$\Gam$ whenever
$0<\varepsilon<\varepsilon_0$.

Finally, choose $h$ and $g$ with rational values on the rays, after
scaling if necessary. Since $\varepsilon_0>0$, choose a rational
$\varepsilon$ with $0<\varepsilon<\varepsilon_0$.  After clearing
denominators, $h_\varepsilon$ is an integral strictly upper convex
support function, hence defines an ample toric Cartier divisor on
$X_\Gam$. Therefore~$\Gam$ is projective~\citep[Thm.~6.1.14]{CLS}.
\end{proof}

\begin{lemma}[Projectivity of the wall-adaptation output]\label{lem:gamma-proj}
The fan $\Gam(\Sig)$ is projective.
\end{lemma}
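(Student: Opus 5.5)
The plan is to apply Lemma~\ref{lem:sandwich} with $\cA=\cA_\Sig$ and $\Gam=\Gam(\Sig)$. The work is therefore to verify, one at a time, the standing hypotheses of Section~\ref{sec:projgamma} in this situation; no new convexity computation should be needed.

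First, $\Sig$ is complete by assumption, since it is a smooth complete fan. Second, Lemma~\ref{lem:wall-properties} shows that $\cA_\Sig$ is a complete projective fan with $\cA_\Sig\preceq\Sig$. It is a genuine fan of strongly convex rational cones, as that lemma also verifies. Third, Proposition~\ref{prop:Gamma} gives that $\Gam(\Sig)$ is smooth and complete and satisfies $\Gam(\Sig)\preceq\cA_\Sig$. Fourth, the same proposition shows that $X_{\Gam(\Sig)}\to X_\Sig$ is a composition of finitely many blow-ups along smooth invariant centers. Each such blow-up is projective, so the composition is a projective refinement morphism.

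With all four hypotheses in place, Lemma~\ref{lem:sandwich} yields that $\Gam(\Sig)$ is projective.

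The only point needing care is the fourth hypothesis. Lemma~\ref{lem:sandwich} uses projectivity of the morphism in the concrete form of a $\Gam$-linear support function $g$ that is strictly convex relative to $\Sig$, via the cited equivalence in \citep[\S 6.1, \S 7.2]{CLS}. It is cleanest to produce such a $g$ directly, by induction along the blow-up sequence. Suppose $g_i$ is strictly convex relative to $\Sig$ on $\Phi_i$, and the next fan is obtained by a star subdivision at $s=u+v$. Define $g_{i+1}=g_i-\eta E_s$, where $E_s$ is the piecewise-linear function with value $1$ on $s$ and $0$ on every other ray. For small $\eta>0$ this function has positive bend on the new walls, since the exceptional divisor is relatively anti-ample, and it keeps positive bend on the old walls by a small-perturbation argument like the one in Lemma~\ref{lem:sandwich}.

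I expect this step to be the main obstacle, though it is essentially standard. If I relied on the citation, I would simply invoke the equivalence between projective toric morphisms and relatively strictly convex support functions. The remaining hypotheses are immediate from earlier results.
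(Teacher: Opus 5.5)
Your main argument is the same as the paper's. You apply Lemma~\ref{lem:sandwich} with $\cA=\cA_\Sig$ and $\Gam=\Gam(\Sig)$, taking the hypotheses from Lemma~\ref{lem:wall-properties} and Proposition~\ref{prop:Gamma}. The paper likewise just cites the equivalence between projective refinement morphisms and relatively strictly convex support functions, rather than constructing $g$.

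Your optional direct construction of $g$, however, has the sign backwards. Take a new wall $\tau=\langle s,w_1,\dots,w_{n-2}\rangle$ inside an old cone $\langle u,v,w_1,\dots,w_{n-2}\rangle$. Its wall relation is $u+v=s$, so by \eqref{eq:bend} you get $\delta_\tau(g_i)=0$ and $\delta_\tau(E_s)=E_s(s)-E_s(u)-E_s(v)=1$. Hence $\delta_\tau(g_i-\eta E_s)=-\eta<0$. The relative interior of such a wall lies in the interior of a maximal cone of $\Sig$, so this violates relative strict convexity.

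The induction works with $g_{i+1}=g_i+\eta E_s$. In the paper's convention, $E_s$ (value $+1$ at $s$) is the support function of minus the exceptional divisor, which is the relatively ample class. Combine this with your small-$\eta$ argument on the remaining walls, where the bend of $g_i$ equals its bend on the old wall containing it. Start the induction from $g_0=0$ on $\Sig$, where the relative condition is vacuous.
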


\begin{proof}
Apply Lemma~\ref{lem:sandwich} with $\cA=\cA_\Sig$ and
$\Gam=\Gam(\Sig)$. The required hypotheses are exactly
Lemma~\ref{lem:wall-properties}, which gives $\cA_\Sig$ projective with
$\cA_\Sig\preceq\Sig$, and Proposition~\ref{prop:Gamma}, which gives
$\Gam(\Sig)$ smooth and complete with $\Gam(\Sig)\preceq\cA_\Sig$ and
$X_{\Gam(\Sig)}\to X_\Sig$ projective.
\end{proof}

The proof above is the toric support-function form of the elementary fact
that a projective morphism to a projective base has projective total space.
Indeed, Lemma~\ref{lem:gbend} says that the relatively ample support
function~$g$ bends strictly on every wall of~$\Gam$ interior to a cone of
$\cA$, while the pulled-back ample support function~$h$ from~$\cA$ bends
strictly on inherited walls. Thus no extra projectivizing refinement is
needed after wall-adaptation; the explicit class $h+\varepsilon g$ is a
support function verifying ampleness on~$X_\Gam$.

\section{Proof of the main theorem}\label{sec:proof}

\begin{proof}[Proof of Theorem~\ref{thm:main}]
Fix the ordered lattice basis~$B$ of~$N$.

\textit{Step 1.}
Construct~$\cA_\Sig$ (Definition~\ref{def:wall}). By
Lemma~\ref{lem:wall-properties}, $\cA_\Sig$ is complete and projective
with $\cA_\Sig \preceq \Sig$.

\textit{Step 2.}
Apply Construction~\ref{cons:sequential} to obtain~$\Gam(\Sig)$. By
Proposition~\ref{prop:Gamma},
$\Gam(\Sig) \preceq \cA_\Sig \preceq \Sig$, and
$X_{\Gam(\Sig)} \to X_\Sig$ is a finite sequence of blow-ups along
smooth torus-invariant centers of codimension exactly two.

\textit{Step 3.}
Set $\wSig = \Gam(\Sig)$.  By Proposition~\ref{prop:Gamma} it is smooth
and complete, and by Lemma~\ref{lem:gamma-proj} it is projective.  This
establishes~(i). Composing the blow-ups of Step~2 gives the claimed sequence, and each
center has codimension two, giving~(ii). The construction is
$B$-canonical because every choice is determined by~$\Sig$ and the
lexicographic order from the fixed ordered basis~$B$, giving~(iii).
\end{proof}

\begin{figure}[H]
\centering
\begin{tikzpicture}[
  box/.style={draw,rounded corners,minimum width=2.4cm,
    minimum height=0.75cm,align=center,font=\small},
  arr/.style={->,>=Stealth,thick}
]
  \node[box] (S) at (0,0) {$\Sig$\\smooth, complete};
  \node[box,fill=green!10] (W) at (6.5,0)
    {$\wSig = \Gam(\Sig)$\\smooth, complete,\\projective\\$\preceq \cA_\Sig$};
  \draw[arr] (S) -- (W) node[midway,above,font=\footnotesize]
    {codim-$2$} node[midway,below,font=\footnotesize] {blow-ups};
\end{tikzpicture}
\caption{%
  Schematic of the construction. The single wall-adaptation stage
  produces a smooth, complete, projective fan; all intermediate
  varieties are smooth and every blow-up center has codimension two}
\label{fig:schematic}
\end{figure}

\section{Examples and length}\label{sec:examples}

This final section illustrates the construction and separates the
algorithmic length of wall-adaptation from the minimal number of ordinary
invariant blow-ups needed to reach a projective fan.

\subsection{Oda's non-projective threefold}\label{subsec:oda}

We first run the algorithm on a genuinely non-projective smooth complete
toric threefold. Let $X_\Sig$ be Oda's example, the smooth complete
threefold of Picard number~$4$ occurring as case~[7-5] of
Fujino--Sato~\citep{FujinoSato} and in Oda's book~\citep{Oda}, with ray
generators
\begin{align*}
  v_1 &= (1,0,0), & v_2 &= (0,1,0), & v_3 &= (0,0,1), & v_4 &= (-1,-1,-1),\\
  v_5 &= (-1,-1,0), & v_6 &= (0,-1,-1), & v_7 &= (-1,0,-1),
\end{align*}
and ten maximal cones
\begin{gather*}
  \langle v_1,v_2,v_3\rangle,
  \langle v_1,v_2,v_7\rangle,
  \langle v_1,v_3,v_6\rangle,
  \langle v_1,v_6,v_7\rangle,
  \langle v_2,v_3,v_5\rangle,\\
  \langle v_2,v_5,v_7\rangle,
  \langle v_3,v_5,v_6\rangle,
  \langle v_4,v_5,v_6\rangle,
  \langle v_4,v_5,v_7\rangle,
  \langle v_4,v_6,v_7\rangle.
\end{gather*}
This fan is smooth and complete, with $f$-vector
$(f_1,f_2,f_3)=(7,15,10)$.

\smallskip

\noindent\textbf{Non-projectivity.}
The fan~$\Sig$ admits an elementary obstruction to projectivity.  Consider
the three walls $\langle v_1,v_7\rangle$, $\langle v_2,v_5\rangle$,
$\langle v_3,v_6\rangle$, whose wall relations in the sense of
\eqref{eq:wallrel} are
\[
  v_2+v_6=v_1+v_7,\qquad
  v_3+v_7=v_2+v_5,\qquad
  v_1+v_5=v_3+v_6.
\]
For any support function~$h$ on~$\Sig$, the corresponding bends
\[
\begin{aligned}
  \delta_1&=h(v_1)+h(v_7)-h(v_2)-h(v_6),\\
  \delta_2&=h(v_2)+h(v_5)-h(v_3)-h(v_7),\\
  \delta_3&=h(v_3)+h(v_6)-h(v_1)-h(v_5)
\end{aligned}
\]
satisfy $\delta_1+\delta_2+\delta_3=0$ identically. Strict upper
convexity would require $\delta_1,\delta_2,\delta_3>0$, which is
incompatible with their vanishing sum. Hence $X_\Sig$ carries no ample
divisor and is non-projective.

For the standard ordered basis of~$\ZZ^3$, the distinct wall normals
of~$\Sig$ are, in canonical order,
\[
\begin{gathered}
(0,0,1),\ (0,1,-1),\ (0,1,0),\ (1,-1,-1),\ (1,-1,0),\\
(1,-1,1),\ (1,0,-1),\ (1,0,0),\ (1,1,-1).
\end{gathered}
\]
Applying Construction~\ref{cons:sequential} gives the following
basis-ordered count of wall-adaptation blow-ups.

\begin{table}[H]
\centering
\begin{tabular}{@{}c c@{}}
\toprule
wall normal $m$ & number of codimension-two star subdivisions \\
\midrule
$(0,0,1)$   & $1$ \\
$(0,1,-1)$  & $3$ \\
$(0,1,0)$   & $1$ \\
$(1,-1,-1)$ & $5$ \\
$(1,-1,0)$  & $2$ \\
$(1,-1,1)$  & $5$ \\
$(1,0,-1)$  & $2$ \\
$(1,0,0)$   & $1$ \\
$(1,1,-1)$  & $5$ \\
\midrule
Total & $25$ \\
\bottomrule
\end{tabular}
\caption{The wall-adaptation stage for Oda's threefold~[7-5] in the
standard ordered basis. At each row the algorithm starts from the fan
produced by the preceding rows.}
\label{tab:oda-run}
\end{table}

For example, the first wall normal $m=(0,0,1)$ has the unique bad
cone~$\langle v_3,v_6\rangle$ in the initial fan. The algorithm inserts
\[
  s=v_3+v_6=(0,-1,0),
\]
and subdivides the two affected maximal cones by replacing
$\langle v_3,v_6\rangle$ with $\langle v_3,s\rangle$ and
$\langle s,v_6\rangle$. Each new maximal cone is smooth because the
operation is the star subdivision at the sum of two generators of a
smooth two-cone. The full computation gives
\[
  f_1(\Gam)=32,\qquad f_2(\Gam)=90,\qquad f_3(\Gam)=60,
\]
so the wall-adaptation stage inserts $32-7=25$ rays, matching
Table~\ref{tab:oda-run}. By Lemma~\ref{lem:gamma-proj}, the resulting
smooth complete fan~$\wSig=\Gam$ is projective. The supplementary verification script (Online Resource~1)
reproduces the run, prints the final $60$ maximal cones, checks
smoothness and completeness, verifies adaptation to all nine wall
normals, and checks an explicit rational support function with all $90$
wall bends positive and minimum bend~$\tfrac12$. Thus this example
illustrates the determinism and uniformity of the algorithm; it is not a
minimality statement. It is worth noting that the
Fujino--Sato case~[8-10], denoted there by~$Z_{10}$, is already
projective; the non-projective test input here is case~[7-5].

\begin{remark}[Algorithmic length and lower-bound audits]\label{rem:length}
Write $f_k(\Phi)=|\Phi(k)|$. Since each star subdivision adds exactly one
ray, the number of blow-ups in the algorithm is
\[
  L_{\mathrm{alg}}(\Sig)=f_1(\Gam(\Sig))-f_1(\Sig),
\]
and for a smooth complete toric variety this is also the Picard-rank
increase from~$X_\Sig$ to~$X_{\wSig}$. For Oda's threefold above,
$L_{\mathrm{alg}}=32-7=25$. This deterministic length should be
distinguished from the minimal ordinary invariant projectivization length
$\ell_{\min}(\Sig)$, the smallest number of star subdivisions at smooth
cones of dimension at least two after which the final fan is projective.
Plainly $\ell_{\min}(\Sig)\le L_{\mathrm{alg}}(\Sig)$, and the inequality
can be strict.

For finite lower-bound audits one may use the cone of wall-bend
obstructions
\[
  \mathcal C(\Phi)=
  \left\{(q_\tau)\in\RR_{\ge0}^{\Phi(n-1)}:
  \sum_{\tau\in\Phi(n-1)}q_\tau\,\delta_\tau\equiv 0\right\}.
\]
Here $\equiv 0$ means equality as a linear functional on the vector space
of values of a $\Phi$-linear support function on the rays. A nonzero
point of~$\mathcal C(\Phi)$ obstructs strict positivity of all wall
bends, while Gordan's theorem of the alternative~\citep[\S~1.4]{Ziegler}
gives the converse: a smooth complete fan is projective if and only if
$\mathcal C(\Phi)=\{0\}$. Thus to prove
$\ell_{\min}(\Sig)>k$, one may enumerate all sequences of at most~$k$
ordinary invariant blow-ups and exhibit a nonzero element of
$\mathcal C(\Phi)$ for every resulting fan~$\Phi$. This obstruction cone
is not monotone under blow-up, so it is better suited to finite audits
than to a closed-form length formula. The badness profile of
Definition~\ref{def:badness-profile} is likewise a termination invariant,
not a formula for~$L_{\mathrm{alg}}$.
\end{remark}

\subsection{A minimal two-blow-up example}\label{subsec:z13}

We next record a small example where the true minimal length is much
smaller than the wall-adaptation length. We use the notation of
Fujino--Sato's discussion of Oda's Picard-number-five case
$[8\text{-}13']$ in~\citep[\S 4]{FujinoSato}; the present fan is the
specialization with parameters $a=b=2$. Its rays are
\begin{align*}
  v_1&=(-1,2,0), & v_2&=(0,-1,0), & v_3&=(1,-1,0), & v_4&=(-1,0,-1),\\
  v_5&=(0,0,-1), & v_6&=(0,1,0), & v_7&=(0,0,1), & v_8&=(1,0,2),
\end{align*}
and its maximal cones are
\begin{gather*}
  \langle v_1,v_2,v_4\rangle,
  \langle v_2,v_3,v_4\rangle,
  \langle v_3,v_4,v_5\rangle,
  \langle v_4,v_5,v_6\rangle,
  \langle v_1,v_4,v_6\rangle,
  \langle v_1,v_6,v_7\rangle,\\
  \langle v_1,v_2,v_7\rangle,
  \langle v_2,v_3,v_7\rangle,
  \langle v_3,v_5,v_8\rangle,
  \langle v_5,v_6,v_8\rangle,
  \langle v_3,v_7,v_8\rangle,
  \langle v_6,v_7,v_8\rangle.
\end{gather*}
This fan has $f$-vector $(f_1,f_2,f_3)=(8,18,12)$ and is
non-projective; an exact wall-bend obstruction has support six. There
are $18$ invariant curves and $12$ fixed points, hence $30$ ordinary
invariant one-step blow-ups to test. An exact rational wall-bend audit
gives a nonzero obstruction element in~$\mathcal C(\Phi)$ after each of
these $30$ single blow-ups, so no one-step ordinary invariant blow-up
makes the fan projective.

On the other hand, two curve blow-ups suffice. First blow up the
invariant curve corresponding to~$\langle v_1,v_2\rangle$, inserting
\[
  s_1=v_1+v_2=(-1,1,0).
\]
Then blow up the curve corresponding to~$\langle v_2,s_1\rangle$,
inserting
\[
  s_2=v_2+s_1=(-1,0,0).
\]
The resulting smooth complete fan has $10$ rays and $16$ maximal cones.
It is projective. Indeed, with the wall-bend convention of~\eqref{eq:bend},
the following integral support-function values have all wall bends
positive:
\[
\begin{array}{c|rrrrrrrrrr}
 r&v_1&v_2&v_3&v_4&v_5&v_6&v_7&v_8&s_1&s_2\\ \hline
 H(r)&0&-25&-27&-6&-14&-3&4&0&1&0
\end{array}
\]
The minimum wall bend is~$2$. Thus the original $Z'_{13}(2,2)$ fan has
$\ell_{\min}=2$.

For comparison, the full basis-ordered wall-adaptation algorithm on the
same input, using the standard ordered basis of~$\ZZ^3$, performs $42$
codimension-two blow-ups and ends with $f$-vector $(50,144,96)$:
\begin{table}[H]
\centering
\begin{tabular}{@{}c c@{}}
\toprule
wall normal $m$ & number of codimension-two star subdivisions \\
\midrule
$(0,0,1)$   & $2$ \\
$(0,1,0)$   & $2$ \\
$(1,0,-1)$  & $2$ \\
$(1,0,0)$   & $1$ \\
$(1,1,-1)$  & $3$ \\
$(1,1,0)$   & $2$ \\
$(2,0,-1)$  & $4$ \\
$(2,1,-2)$  & $9$ \\
$(2,1,0)$   & $9$ \\
$(2,2,-1)$  & $8$ \\
\midrule
Total & $42$ \\
\bottomrule
\end{tabular}
\caption{The basis-ordered wall-adaptation stage for $Z'_{13}(2,2)$ in
the standard ordered basis.}
\label{tab:z13-run}
\end{table}
Thus this example cleanly separates the deterministic algorithmic length
from the true minimal projectivizing length. The supplementary
verification script (Online Resource~1) verifies the finite one-step
audit, the stated two-step projectivization, and the displayed support
function using exact arithmetic.

\begin{remark}[Dependence, bad centers, and smoothness]\label{rem:final}
The output is canonical relative to a fixed ordered lattice basis: the
basis orients wall normals, orders the wall rounds, and breaks ties among
bad two-cones. Changing the ordered basis can change the order of
operations and hence the final basis-canonical output, although the
hyperplane arrangement underlying~$\cA_\Sig$ is intrinsic to~$\Sig$.

The wall-adaptation centers are orbit closures of $m$-bad two-cones,
hence smooth invariant $(n{-}2)$-folds; for $n=3$ they are invariant
curves, matching the geometric ``bad curve'' intuition behind the
threefold examples studied by Peternell and Bonavero~\citep{Peternell,Bonavero}.
Badness is relative to a wall normal $m_i\in\cM_\Sig$, not intrinsic to
the subvariety. For the full set~$\cM_\Sig$, however, it detects the need
for the construction: if no $m_i$-bad two-cone is present, then
$\Sig\preceq\cA_\Sig$, and Lemma~\ref{lem:sandwich} applied to the
identity morphism gives projectivity. A single wall normal alone is not a
projectivity criterion.

Finally, subordination to a projective fan is not by itself enough to
imply projectivity of the subdivision; Lemma~\ref{lem:sandwich} is the
additional sandwich argument that uses the relatively ample class for
$X_{\Gam(\Sig)}\to X_\Sig$. The smoothness hypothesis is also essential:
if~$\Sig$ is not smooth, $u+v$ need not be primitive when
$\langle u,v\rangle$ is not smooth, so the sign-adaptation algorithm would
require a separate desingularization step or a modified weighted
subdivision rule.
\end{remark}

\section*{Statements and Declarations}

\noindent\textbf{Funding}
The author declares that no funds, grants, or other support were received during the preparation of this manuscript.

\noindent\textbf{Competing interests}
The author declares that there is no conflict of interest.

\noindent\textbf{Data availability}
The supplementary verification script \texttt{verify\_toric\_examples.py}
is submitted as supplementary material (Online Resource~1). It reproduces
the computed wall-adaptation runs and verifies the stated smoothness,
completeness, non-projectivity, projectivity, and wall-bend claims using
exact arithmetic.

\noindent\textbf{Use of AI tools}
The author used Claude (Anthropic) and ChatGPT (OpenAI) during
manuscript development for exploratory discussion, drafting assistance,
proof-checking prompts, and computational sanity checks. The final
literature positioning, verification of the arguments, and responsibility
for all mathematical claims are the author's.


\end{document}